\newtheorem{thm}{Theorem}
\newtheorem{prop}{Proposition}
\newtheorem{lem}{Lemma}
\newtheorem{defn}{Definition}
\newtheorem{cor}{Corollary}
\newtheorem{prb}{Problem}
\newcommand{\newword}[1]{\textbf{\emph{#1}}}
\begin{document}

\title{A Combinatorial Problem from Group Theory}

\author{Eugene Curtin and Suho Oh}
\maketitle




\begin{abstract}
Keller proposed a combinatorial conjecture on construction of an $n$-by-infinite matrix, which comes from showing the existence of many orbits of different sizes in certain linear group actions \cite{Keller2014}. He proved it for the case $n=4$, and we show that conjecture is true in the general case. We also propose a combinatorial game version of the conjecture which even further generalizes the problem.
\end{abstract}







\section{The original problem}

Keller conjectured \cite{Keller2014} that given any $n\times \infty$ matrix of $n$ element sets $(S_{i,j})$, 
it is possible to construct an  $n\times \infty$ matrix $(x_{i,j})$ satisfying the following conditions: 
\begin{itemize}
\item For all $i$ and $j$, $x_{i,j}\in S_{i,j}$.  
\item The first $n-2$ elements in each row are distinct and never repeated later in the row. 
\item For all $t$, the $n$ sets  $\{x_{i,1},x_{i,2}, \ldots ,x_{i,t}\}$ ($1$ set from each row $i$) are distinct. 
\end{itemize}

This problem comes from showing the existence of many orbits of different sizes in certain linear group actions. Keller showed that it is possible when $n=4$, and conjectured that it was true in general.

The problem is very easy to solve in certain special cases. When all $S_{i,j}$ are all pairwise disjoint, any choice of $x_{i,j}$'s would lead to a valid construction. When all $S_{i,j} = \{1, \ldots, n\}$, 

\[ \begin{array}{ccccccccc}
1 & 2 & \cdots & n-2 & n-1 & n-1 & \cdots & n-1 & \cdots \\
2 & 3 & \cdots & n-1 & n & n & \cdots & n & \cdots \\
3 & 4 & \cdots & n & 1 & 1 & \cdots & 1 & \cdots \\
4 & 5 & \cdots & 1 & 2 & 2 & \cdots & 2 & \cdots \\
\vdots & \vdots & \vdots & \vdots & \vdots & \vdots & \vdots \\
n & 1 & \cdots & n-3 & n-2 & n-2 & \cdots & n-2 & \cdots \end{array} \]

gives an easy solution. So the problem is trivial when all $S_{i,j}$ are pairwise disjoint, or when all of them are the same.

In Section~\ref{the proof}, we will show that the conjecture is true for any value of $n$. In Section~\ref{the game}, we propose a combinatorial game which generalizes this original problem.

\section{The proof of the problem}
\label{the proof}
We will first show that it is enough to prove when one has finitely many number of columns. 

\begin{lem}[K{\H o}nig]
\cite{Konig}
Let G be a connected graph with infinitely many vertices such that each vertex is adjacent to only finitely many other vertices. Then G contains a path with no repeated vertices that starts at one vertex and continues from it through infinitely many vertices.
\end{lem}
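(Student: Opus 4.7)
My plan is to construct the required infinite simple path one vertex at a time, maintaining the invariant that whatever path I have built so far is embedded in an infinite connected subgraph of $G$. Concretely, I would inductively produce a sequence of vertices $v_0, v_1, v_2, \ldots$ together with a nested sequence of infinite connected subgraphs $H_0 \supseteq H_1 \supseteq H_2 \supseteq \cdots$ of $G$ such that $v_k \in H_k$, the vertex $v_k$ is adjacent to $v_{k-1}$ in $G$, and $H_k$ avoids all of $v_0,\ldots,v_{k-1}$.

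The engine of the induction is a little pigeonhole sublemma that I would isolate first: if $H$ is a connected graph with infinitely many vertices and $v \in H$ has only finitely many neighbors in $H$, then $H \setminus \{v\}$ splits into at most $\deg_H(v)$ connected components (because connectivity of $H$ forces every surviving vertex to lie in the component of some neighbor of $v$), so since $H \setminus \{v\}$ is still infinite, at least one of those components must itself be infinite. This gives a canonical way to shrink $H$ to a smaller infinite connected subgraph that is still adjacent to $v$ through at least one of its neighbors.

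Starting from $H_0 := G$ and any vertex $v_0$, the induction proceeds as follows: given $(H_k, v_k)$, apply the sublemma to $v_k$ inside $H_k$ to obtain an infinite component $H_{k+1}$ of $H_k \setminus \{v_k\}$; since $H_k$ was connected, some neighbor of $v_k$ must lie in $H_{k+1}$, and I take $v_{k+1}$ to be any such neighbor. By construction, $v_{k+1}$ is adjacent to $v_k$ and $v_{k+1}$ does not equal any of $v_0,\ldots,v_k$ (the earlier vertices were deleted at earlier stages), so the sequence $v_0, v_1, v_2, \ldots$ traces out an infinite path with no repeated vertices.

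\textbf{Main obstacle.} The only point that needs any care is the sublemma, specifically verifying both that $H \setminus \{v\}$ has finitely many components (uses the finite degree of $v$ together with connectedness of $H$) and that one such component is infinite (uses the infinitude of $H$). Once that is in hand, distinctness of the $v_i$ follows automatically from the nested structure $v_i \notin H_{i+1} \supseteq H_{i+2} \supseteq \cdots$, and the rest is a purely mechanical greedy recursion, so I do not expect any genuinely subtle step.
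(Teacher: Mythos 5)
Your proposal is correct: the pigeonhole sublemma is sound (every component of $H_k \setminus \{v_k\}$ contains a neighbor of $v_k$ by connectedness, so there are at most $\deg_{H_k}(v_k)$ components and one must be infinite), and the nested subgraphs $H_0 \supseteq H_1 \supseteq \cdots$ do guarantee that the $v_i$ are pairwise distinct, so the construction yields an infinite simple path. Note that the paper does not prove this lemma at all --- it is quoted with a citation to \cite{Konig} and used as a black box in Corollary~\ref{Konig} --- and what you have written is precisely the classical proof of K{\H o}nig's infinity lemma, so there is no divergence to report.
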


\begin{cor}
\label{Konig}
If one can show that the conjecture is true for all $n$-by-$k$ matrices, it implies that the conjecture is true for $n$-by-$\infty$ matrices.
\end{cor}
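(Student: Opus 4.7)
The plan is to build a rooted tree whose nodes are valid finite partial constructions, then apply König's lemma to extract an infinite branch, and finally verify that this branch assembles into a valid infinite construction.

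First I would define the tree $T$ as follows. For each $k \ge 0$, put a vertex at level $k$ for every $n \times k$ matrix $(x_{i,j})_{i\le n,\, j\le k}$ satisfying the three conditions of the conjecture restricted to its $k$ columns (the empty matrix at level $0$ is the root). Draw an edge from every vertex at level $k+1$ to the vertex at level $k$ obtained by deleting its last column. The first step is to check that this parent is well defined, i.e.\ that the truncation of a valid $(k+1)$-column construction is a valid $k$-column construction: conditions (1) and (3) are level-wise and conditions only become weaker under truncation; for condition (2), if the first $n-2$ entries of a row are distinct and not repeated among the first $k+1$ columns, they remain so among the first $k$. Hence $T$ is genuinely a tree rooted at the empty matrix.

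Next I would verify the two hypotheses of König's lemma. Each vertex has finite degree, since it has exactly one parent and the number of children is bounded by $\prod_{i=1}^{n}|S_{i,k+1}| = n^{n}$. The tree is connected since every vertex is joined to the root via the chain of truncations. To see that $T$ is infinite, I would invoke the standing assumption that the conjecture holds for every finite $n\times k$ matrix: applied to the first $k$ columns of the given infinite matrix $(S_{i,j})$, it guarantees at least one vertex at level $k$, for every $k$. So $T$ has vertices at arbitrarily high level and is therefore infinite.

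By König's lemma, $T$ contains a simple path through infinitely many vertices. Since each level of $T$ is finite (it has at most $n^{nk}$ vertices), any infinite simple path in $T$ must visit vertices at unboundedly high levels; trimming the initial ascending portion yields an infinite descending ray $v_0, v_1, v_2, \dots$ from the root with $v_k$ at level $k$ and $v_k$ the parent of $v_{k+1}$. Define $x_{i,j}$ to be the $(i,j)$-entry of $v_j$ (which, by construction, agrees with the $(i,j)$-entry of $v_k$ for every $k\ge j$).

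The last step is to verify that the infinite matrix $(x_{i,j})$ satisfies the three original conditions. Each condition is a statement quantified over finite data: condition (1) at position $(i,j)$ and condition (3) at time $t$ depend only on the first $\max(j,t)$ columns, and condition (2) for any fixed row index $i$ and any fixed later column $j$ involves only columns $1,\dots,j$. All of these are already guaranteed by the validity of the finite construction $v_k$ for sufficiently large $k$, so the infinite matrix is valid. I expect the main conceptual point to be articulating the trimming argument that converts König's unrooted infinite path into a descending branch from the root; everything else is bookkeeping.
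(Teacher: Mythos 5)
Your proposal is correct and is essentially the paper's argument: both form the graph of finite valid constructions joined by deleting the last column and apply K{\H o}nig's lemma, your empty-matrix root simply replacing the paper's observation that the forest has only finitely many components (one per $n$-by-$1$ solution). The one small slip is that the trimmed ray need not start at the root but at some vertex of level $m$; since you already checked that truncations of valid constructions are valid, the entries in columns $1,\dots,m$ are supplied by that first vertex, so the conclusion is unaffected.
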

\begin{proof}
Fix $n$ and $S_{i,j}$'s. Let $G$ be a graph where the vertices are given by all possible solutions to the $n$-by-$k$ cases, where $k$ goes from $1$ to $\infty$. We connect two vertices $a,b$ by an edge if $a$ is a solution to the $n$-by-$(k-1)$ case, $b$ is a solution to the $n$-by-$k$ case, and $a$ is exactly $b$ with last column deleted. Then this graph has infinitely many vertices, and only has finitely many number of components (since there are only finitely many possible solutions to the $n$-by-$1$ case). Now pick a component that has infinitely many vertices, and apply K{\H o}nig's infinity lemma in order to get a non-repeated path of infinite length. This path gives a solution to the $n$-by-$\infty$ case.
\end{proof}

Now we will introduce some notation to make the proof smoother. 

\begin{defn}
We say that a sequence $\{a_i\}$ is a \newword{representative} of a sequence of sets $\{S_i\}$ if for each $i$, $a_i \in S_i$. Given two sequences $\{a_1,a_2,\ldots\}$ and $\{b_1,b_2,\ldots\}$, we say that they are \newword{equivalent} if $\{a_1,\ldots,a_k\} = \{b_1,\ldots,b_k\}$ for all $k$. And we will say that they are \newword{compatible} if $\{a_1,\ldots,a_k\} \not =  \{b_1,\ldots,b_k\}$ for all $k$. A representative is \newword{regular} if $x_j \not \in \{x_1, \dots ,x_{j-1} \}$  when $S_j \setminus \{x_1, \dots ,x_{j-1} \}$ has at least 2 elements.
\end{defn}

For example, the sequences $\{1,2,3,1,1,\ldots\}$ and $\{1,2,3,2,2,\ldots\}$ are equivalent. The sequences $\{1,2,3,4,4,\ldots\}$ and $\{2,3,4,5,5,\ldots\}$ are compatible. One can think of a regular sequence as a sequence where an element that has already appeared before may be chosen only if there aren't at least two new elements to choose from. Regularity of a sequence implies the second condition of Keller's original problem in Section 1. The following is the reformulation of Keller's problem, except that we relax the condition on the cardinality of the $S_{i,j}$'s. 

\begin{defn}
We say that an $n$-by-$k$ matrix where the entries are given by sets $S_{i,j}$ of cardinality at least $n$ is an \newword{$(n,k)$-system}. A \newword{solution} of this system is an $n$-by-$k$ matrix with entries $x_{i,j}$ such that:
\begin{itemize}
\item Each row of the solution is a regular representative of the corresponding row in the system.
\item All rows are pairwise compatible.
\end{itemize} 
\end{defn}

We now prove our main result.

\begin{thm}
There exists a solution to any $(n,k)$-system. Moreover, consider the $(n,1)$-system obtained by just looking at the $S_{i,1}$'s. Any solution to this $(n,1)$-system can be extended to a solution of the $(n,k)$-system.
\end{thm}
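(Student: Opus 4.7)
I would prove the theorem by induction on $j$, extending any $(n,j)$-solution to an $(n,j+1)$-solution for each $1 \leq j < k$. Iterating from the given $(n,1)$-solution then produces the $(n,k)$-solution. (Existence of some $(n,1)$-solution, needed for the first half of the theorem, is immediate: greedily choose $x_{i,1} \in S_{i,1}$ avoiding the $i - 1 < n \leq |S_{i,1}|$ previously chosen values.)

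For the inductive step, let $R_i = \{x_{i,1}, \ldots, x_{i,j}\}$ be the set of distinct values in row $i$ so far, pairwise distinct by the current compatibility, and set $A_i = S_{i,j+1} \setminus R_i$. Regularity demands $x_{i,j+1} \in A_i$ when $|A_i| \geq 2$ (call such a row \emph{forced}); when $|A_i| \leq 1$ (a \emph{free} row), $x_{i,j+1}$ may be any element of $S_{i,j+1}$. Let $\mathcal{F}_i$ collect the possible updated sets $R_i^+ = R_i \cup \{x_{i,j+1}\}$: for a forced row this is $\{R_i \cup \{a\} : a \in A_i\}$, and for a free row this is $\{R_i\} \cup \{R_i \cup \{a\} : a \in A_i\}$. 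Extending the solution by one column is exactly the problem of finding a system of distinct representatives for $\mathcal{F}_1, \ldots, \mathcal{F}_n$.

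I would verify Hall's marriage condition, $\left|\bigcup_{i \in I} \mathcal{F}_i\right| \geq |I|$ for every $I \subseteq \{1, \ldots, n\}$. Each free row in $I$ contributes $R_i \in \mathcal{F}_i$, and the $R_i$'s are distinct across rows, giving at least $|I \cap \{\textup{free rows}\}|$ distinct sets in the union. Each forced row contributes $|A_i| \geq 2$ distinct grown sets $R_i \cup \{a\}$. The main obstacle is controlling collisions: $R_i \cup \{a\} = R_{i'} \cup \{b\}$ among two forced rows forces $|R_i| = |R_{i'}|$ with $R_i \triangle R_{i'} = \{a, b\}$, $a \in R_{i'}$, $b \in R_i$ (a swap); a collision between a forced row's grown set and a free row's repeat $R_{i'}$ forces $R_{i'} = R_i \cup \{a\}$ and hence $|R_{i'}| = |R_i| + 1$. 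A careful count stratified by $|R_i|$ should show these structural constraints are never dense enough to violate Hall.

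Should the Hall verification prove awkward, an alternative is a direct greedy construction: process rows in decreasing order of $|R_i|$ and pick each $x_{i,j+1}$ to avoid the at most $i - 1$ conflicts imposed by earlier choices, repairing any leftover obstruction by exchanging choices between the offending pair (the swap structure above guarantees such an exchange is available). Either route reduces the theorem to a single clean extension lemma, and the induction closes.
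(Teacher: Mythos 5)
Your plan hinges on a single-column extension lemma: that \emph{every} solution of the $(n,j)$-system extends to a solution of the $(n,j+1)$-system, verified via Hall's condition for the families $\mathcal{F}_i$. That lemma is false, and Hall's condition genuinely fails, so neither the SDR route nor the greedy-with-swaps route can close the induction. Concretely, take $n=3$, with columns
$S_{1,1}=S_{1,2}=\{1,2,3\}$, $S_{1,3}=\{1,2,5\}$, $S_{1,4}=\{1,3,4\}$;
$S_{2,1}=\{1,2,3\}$, $S_{2,2}=\{2,3,4\}$, $S_{2,3}=S_{2,4}=\{1,2,3\}$;
$S_{3,1}=\{2,4,5\}$, $S_{3,2}=\{1,2,3\}$, $S_{3,3}=S_{3,4}=\{1,2,4\}$.
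The rows $(1,2,1)$, $(2,3,1)$, $(4,2,1)$ form a legitimate solution of the $(3,3)$-system (the repeat $1$ in row $1$ is regular because $S_{1,3}\setminus\{1,2\}=\{5\}$ has one element, and the prefix sets $\{1\},\{2\},\{4\}$; $\{1,2\},\{2,3\},\{2,4\}$; $\{1,2\},\{1,2,3\},\{1,2,4\}$ are pairwise distinct at each length). At column $4$, row $1$ is forced (two new elements $3,4$ are available) and must move to $\{1,2,3\}$ or $\{1,2,4\}$, while rows $2$ and $3$ are pinned at $\{1,2,3\}$ and $\{1,2,4\}$ since $S_{2,4}\subseteq R_2$ and $S_{3,4}\subseteq R_3$. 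So $\bigl|\mathcal{F}_1\cup\mathcal{F}_2\cup\mathcal{F}_3\bigr|=2<3$ and no extension exists — this is exactly the collision pattern ($R_{i'}=R_i\cup\{a\}$) you flagged as needing a ``careful count''; it is dense enough to kill Hall. Note the $(3,4)$-system itself is solvable from the same first column (e.g.\ rows $(1,2,5,3)$, $(2,3,1,1)$, $(4,2,1,1)$), so the defect is not in the theorem but in the claim that \emph{arbitrary} partial solutions extend: the theorem only promises extendability from column $1$, and bad choices in intermediate columns can paint you into a corner.

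This is precisely why the paper does not induct on columns. Its induction is on the number of rows $n$: it selects an element $a$ of the first column that can recur earliest in another row, reserves in advance the positions $d_i$ where $a$ will be inserted in each row, deletes a row and solves a modified $(n,k-1)$-system by induction, and only then builds the last row using an ordering argument to dodge collisions. The essential ingredient your approach lacks is this lookahead into future columns; any correct proof must use global information about the $S_{i,t}$ for $t>j+1$ when choosing column $j+1$, which a per-column Hall/greedy argument by construction cannot do.
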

\begin{proof}
Set $S$ to be the union of all the $S_{i,j}$'s. We will first show that it is possible to assume that $\bigcup_{j\ge 2} S_{i,j} = S$ for all rows. If not, extend the $(n,k)$-system to an $(n,k+1)$-system where $S_{i,k+1}=S$ for all $i$. If we can find a solution to this $(n,k+1)$-system, we can just delete the last column to obtain a solution to the original $(n,k)$-system.

The statement is obviously true when $n=1$. For sake of induction, assume the statement is true for $n$, and we will show that it is also true for $n+1$. Pick an arbitrary solution $p$ of the $(n+1,1)$-system. For each $i \not = j$, we set $q_{i,j}$ as the least $t \geq 2$ such that $p_{i,1} \in S_{j,t}$. These numbers are well defined thanks to the assumption we made in the beginning of the proof. Properly reorder the rows so that $q_{1,2}$ is the minimum among the $q_{i,j}$'s. Set $a$ to be $p_{1,1}$. Set $d_1=1$, and for each $i \geq 2$, denote $d_i$ as the least $l \geq 2$ such that $a \in S_{i,l}$. (So $d_i=q_{1,i}$ for $i \geq 2$.)  Now reorder the rows $2,\ldots,n+1$ such that $d_2 \leq \cdots \leq d_{n+1}$. The positions $(i,d_i)'s$ will form a diagonal of sorts, encoding the earliest possible position where $a$ can appear in each row (except the first column). Let $b$ be $p_{n+1,1}$, the entry of the last row of the solution. 

Now in the $(n+1,k)$-system we are looking at, do the following:
\begin{itemize}
\item delete the last row,
\item for each row $i$, for all columns $t$ past the diagonal $d_i$ ($t \geq d_i$), remove $a$ from $S_{i,t}$,
\item for each row $i$, for all early columns $t$ before the diagonal ($t \leq min(d_i,n-1)$), remove $b$ from $S_{i,t}$.
\item and erase the entries at $(i,d_i)$.
\end{itemize}

(The description of the third operation might seem odd. Avoiding early occurrences of $b$ in the first $n$ rows of $x$ we get later on helps achieve compatibility with the eventual $(n+1)$-th row of $x$, and we allow later occurrences to maintain regularity.) Now we have an $(n,k-1)$-system. Set $y$ to be $y_{i,1} = p_{i,1}$ for $2 \leq i \leq n$, and set $y_{1,1}$ to be any element of $S_{1,2} \setminus \{a,y_{2,1}\ldots,y_{n,1}\}$. Then $y$ is a solution to the $(n,1)$-system, and we can extend this to a solution of the $(n,k-1)$-system via the induction hypothesis. Name this solution again as $y$. Insert $a$ into the $d_i$-th position of each row of $y$ to get $x$. We claim that $x$ is a solution to the $(n,k)$ system obtained from the original $(n+1,k)$-system by deleting the last row. The rows of $x$ are pairwise compatible since the rows of $y$ are pairwise compatible and $a$ appears exactly once per each row of $x$. Now assume for the sake of contradiction that some row $i$ of $x$ is not regular. This means there is some entry $j$ such that $x_{i,j} \in \{x_{i,1},\ldots,x_{i,j-1}\}$ and $|S_{i,j} \setminus \{x_{i,1},\ldots,x_{i,j-1}\}| \geq 2$. Now $j$ obviously can't be $ \geq d_i$ since $y$ is regular and $a$ appears only once per row. It also can't be between $1$ and $n-1$ since repeated elements in each row can only start appearing from column $n$. And for $j$ between $n$ and $d_i$, $S_{i,j}$ hasn't been changed so regularity of $y$ gives a contradiction. This finishes the proof of the claim that $x$ is a solution to the $(n,k)$ system obtained from the original $(n+1,k)$-system by deleting the last row.

All that remains now is to fill out the $(n+1)$-th row of $x$. If $d_{n+1} \leq n$, then $d_j\le n$ for all $j$ and $a$ must appear before $b$ in each of the first $n$ rows of $x$. In this case, any regular representative not using $a$ will suffice. When $d_{n+1} > n$, consider any regular representative of the last row up to the $n$-th position, $b=z_1,\ldots,z_n$. Each row of $x$ puts a partial ordering on $\{z_2,\ldots,z_n\}$ : look at the first position where $z_i$ appears in that row and order the $z_i$'s according to that position so that the ones appearing earlier are smaller. Extend that ordering into a total ordering by making the elements that didn't appear bigger than the ones that appeared. So we get $n-2$ total orderings from row $3$ to row $n$ in $x$. There is some symbol, say $z_t$, that is not the last in any of the $n-2$ orderings. Replace $z_t$ with any $z' \in S_{n+1,t} \setminus \{z_2,\ldots,z_n\}$, and extend the sequence to a regular representative of the last row of the system by imposing $x_{n+1,j} \not = z_t$ for $n+1 \leq j < d_{n+1}$ and $x_{n+1,j} \not = a$ for $j \geq d_{n+1}$. (This is possible since we are always allowed to avoid one particular element while constructing a regular representative.) 

Now we want to show that this sequence, the new $n+1$-th row of $x$, is compatible with rows $1$ to $n$ of $x$. For row $1$ it is obvious since row $1$ starts with $a$ and $(n+1)$-th row does not contain $a$ at all. Denote $\{x_{i,1},\ldots,x_{i,j}\}$ by $C(i,j)$. Assume for the sake of contradiction that $C(i,j) = C(n+1,j)$ for some $3 \leq i \leq n$. Since row $n+1$ starts with $b$ and contains no $a$, we need $b \in C(i,j)$ and $a\notin C(i,j)$, hence $j \geq n$. But this implies $\{z_2,\ldots,z_n\} \setminus \{z_t\} \subset C(n+1,j)=C(i,j)$. From the definition of $z_t$, it can't be the last element to occur among $\{z_2,\ldots,z_n\}$ in row $3$ to row $n$, so it has to be contained in $C(i,j)$. We get $z_t \in C(i,j) = C(n+1,j)$, which implies $j \geq d_{n+1}$. From $d_i \leq d_{n+1}$, we have $a \in C(i,j) = C(n+1,j)$, but we get a contradiction since $a$ never appears in the last row. The only case remaining to check is the compatibility against row $2$. Again assume for the sake of contradiction that $C(2,j) = C(n+1,j)$ for some $j$. Since $a$ never appears in the $(n+1)$-th row, $j < d_2$. But $b$ being the first element in the $n+1$-th row means $b \in C(2,d_2-1)$. This contradicts the choice of $a$, which should be an entry of $p$ that can be repeated the earliest among different rows. Therefore this finishes the proof of the claim that the $n+1$-th row of $x$ is compatible with all other rows of $x$. Hence $x$ is a solution of the $(n+1,k)$-system, and this proves the theorem by induction.



\end{proof}

As we have mentioned in Corollary~\ref{Konig}, combining this result with K{\H o}nig's Lemma implies the following result:
\begin{thm}
Keller's conjecture is true for any value of $n$.
\end{thm}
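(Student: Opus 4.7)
The plan is to combine the existence theorem for $(n,k)$-systems proved just above with Corollary~\ref{Konig}. First I would recast Keller's conjecture in the paper's language: Keller's matrix of $n$-element sets is precisely the special case $|S_{i,j}|=n$ of an $(n,\infty)$-system, so it is enough to produce a solution of the corresponding $(n,\infty)$-system and then check that such a solution meets Keller's three conditions.

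Matching the conditions is mostly bookkeeping. The first condition, $x_{i,j} \in S_{i,j}$, is by definition of a representative. The third, distinctness of the prefix sets $\{x_{i,1},\ldots,x_{i,t}\}$ across rows for every $t$, is verbatim pairwise compatibility. For the second condition I would use the cardinality hypothesis: at any of the first $n-1$ columns of a row, the set of unused elements still has size at least $2$, so regularity forces a brand new entry; and a short additional check, again using $|S_{i,j}| \geq n$ together with regularity, handles the ``never repeated later in the row'' clause for the first $n-2$ positions.

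With the dictionary in place, I would apply the preceding theorem to manufacture, for each finite $k$, a solution of the $(n,k)$-system obtained from Keller's data, and then invoke Corollary~\ref{Konig} to lift these finite solutions to a single solution with infinitely many columns via K{\H o}nig's lemma on the tree of partial solutions. The only step with any real content is translating regularity into Keller's second condition, and that is a short combinatorial observation rather than a genuine obstacle; I do not anticipate any hard step in this argument, since both of the inputs it relies on have already been established.
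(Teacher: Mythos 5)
Your proposal matches the paper's own proof essentially verbatim: the paper also obtains this theorem by feeding the main theorem on $(n,k)$-systems into Corollary~\ref{Konig}, with the dictionary (representative $=$ condition 1, pairwise compatibility $=$ condition 3, regularity handling condition 2) already set up in Section~\ref{the proof}. The only caveat, which you share with the paper's own gloss, is that regularity plus $|S_{i,j}|\ge n$ does not literally verify the ``never repeated later'' clause (e.g.\ for $n=3$ and all sets $\{1,2,3\}$ the row $1,2,1,\ldots$ is regular); but any such repeat occurs only when at most one element of $S_{i,j}$ is unused, so it can be replaced by an already-used element of $S_{i,j}$ outside the first $n-2$ entries (which exists since $|S_{i,j}\cap\{x_{i,1},\ldots,x_{i,j-1}\}|\ge n-1>n-2$) without changing any prefix set, so compatibility and regularity survive and the translation goes through.
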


\section{Combinatorial game version}
\label{the game}
In this section we propose a combinatorial game. If a solution to such game always exists, it would imply the solution to Keller's problem. Recall that $[n]$ denotes $\{1,\cdots,n\}$.

\begin{prb}
Fix a natural number $n$. You start with $n$ sets $S_i = \{i\}$. Each round, for each $i \in [n]$, an adversary offers two elements from $[n] \setminus S_i$. Your goal is to pick one of the two elements offered, to add to $S_i$, such that all $S_i$'s are still pairwise different. Can you always make the correct choice for $n-2$ rounds? (So that $S_i$'s become the $n$ subsets of cardinality $n-1$ of the set $[n]$).
\end{prb}

It is easy to see that solving this problem would imply a solution to Keller's problem. A slightly weaker version of the problem can be described in terms of finding $n$ vertex disjoint paths in $n$ subposets of a Boolean lattice $B_n$. We say that a subposet $P$ of $B_n$ is \newword{branching} if every non-maximal element $A \in P$ is covered by at least $2$ elements of $P$ of cardinality $|A|+1$. We call the maximal elements of a branching poset to be the \newword{leaves} of the poset. A subposet of $B_n$ is \newword{rooted} if it contains the minimal element $\emptyset$. A subposet of $B_n$ is \newword{wide} if it contains all the singletons. A \newword{full path} is a simple path from some $\{i\}$ to some $[n] \setminus \{j\}$ in $B_n$.

\begin{prb}
\label{prb:posetprb}
Consider the Boolean lattice $B_n$. Let $P_1,\ldots,P_n$ be rooted branching subposets of $B_n$, such that $\{i\} \in P_i$ for all $i \in [n]$. Can one find a simple path for each $P_i$ so that they are all vertex disjoint?
\end{prb}

It is not hard to see that a solution to this problem would give a solution to Keller's problem: think of a sequence $a_1,\ldots,a_k$ as the set $\{a_1,\ldots,a_k\}$. So considering all possible sets you get from all possible sequences of a given row in Keller's problem gives us a subposet of $B_n$, where $n = |\bigcup_{i,j} S_{i,j}|$. This poset is a branching poset thanks to the regularity condition. 

We will show that this problem is easily solved when $P_1 = \cdots = P_n$. We denote this poset as $P$, and $P$ has to be wide. Assume for the sake of contradiction that we can't find $n$ vertex disjoint full paths in $P$. Using the following Menger's theorem tells us that there is a set $C$ of vertices in $P$ that separates the singletons from the complement of the singletons, with $|C| < n$.

\begin{thm}[\cite{Menger}]
Let $G$ be a graph with $A$ and $B$ two disjoint vertex subsets of $G$, with $|A| = |B| = n$. Then either $G$ contains $n$ pairwise vertex disjoint paths from $A$ to $B$, or there exists a set of fewer than $n$ vertices that separates $A$ from $B$.
\end{thm}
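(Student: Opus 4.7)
The plan is to prove the nontrivial direction: if every A-B separator has at least $n$ vertices, then $n$ vertex-disjoint A-B paths exist. (The converse is immediate, since any A-B separator must contain a vertex from each of $n$ disjoint A-B paths.) I would argue by strong induction on $|V(G)| + |E(G)|$. Since $A$ itself is an A-B separator of size $n$, the hypothesis forces the minimum A-B separator to have size exactly $n$, with both $A$ and $B$ attaining this minimum.

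The central case split is whether some minimum A-B separator $S$ is distinct from both $A$ and $B$. If so, I would split $G$ into two overlapping induced subgraphs: let $G_A$ be the induced subgraph on $S$ together with all vertices reachable from $A$ without passing through $S$, and $G_B$ symmetrically. The condition $S \neq A, B$ guarantees $V(G_A), V(G_B) \subsetneq V(G)$ (since vertices of $B \setminus S$ and $A \setminus S$ are excluded from the opposite side), so induction applies to each. By the minimality of $S$, any A-S separator in $G_A$ lifts to an A-B separator in $G$, so the minimum A-S separator in $G_A$ has size at least $n$; induction yields $n$ vertex-disjoint A-S paths, which by a counting argument cover all of $S$. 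Symmetric treatment of $G_B$ produces $n$ vertex-disjoint S-B paths, and gluing matched pairs at their common $S$-endpoints (using $V(G_A) \cap V(G_B) = S$) gives the desired $n$ disjoint A-B paths in $G$.

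The main obstacle is the second case, where $A$ and $B$ are the only minimum A-B separators. Here I would use a deletion reduction. If some A-B edge $ab$ exists with $a \in A$, $b \in B$, form $G' = G - \{a,b\}$ with $A' = A \setminus \{a\}$ and $B' = B \setminus \{b\}$. Any A'-B' separator $T$ of $G'$ must satisfy $|T| \geq n - 1$: otherwise $T \cup \{a,b\}$ is an A-B separator of $G$ of size at most $n$, and the case assumption rules out both possibilities --- a separator of size below $n$ violates the hypothesis, while one of size exactly $n$ must equal $A$ or $B$, forcing $b \in A$ or $a \in B$ and contradicting $A \cap B = \emptyset$. Induction on $G'$ then gives $n - 1$ disjoint A'-B' paths, and the edge $ab$ completes them to $n$ paths. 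If no A-B edge exists, then either $V(G) \subseteq A \cup B$ (no A-B paths, so $n = 0$ vacuously) or some $v \notin A \cup B$ can be deleted, and an analogous argument shows the minimum A-B separator of $G - v$ remains at least $n$: a separator of size $n - 1$ in $G - v$ would, together with $v$, be a minimum A-B separator of $G$ containing $v \notin A \cup B$, contradicting the case hypothesis. The delicate point throughout is that the restrictive assumption "only $A$ and $B$ attain the minimum" is exactly what rules out the phantom separators that would otherwise obstruct the local reductions.
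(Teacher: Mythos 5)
The paper does not prove this statement---it is quoted as a known result with a citation to Menger---so there is no in-paper argument to compare against; your proposal is judged on its own. What you have written is essentially the classical inductive proof of Menger's theorem (the G\"oring/Diestel-style induction on $|V|+|E|$, splitting on whether some minimum separator differs from both $A$ and $B$), and the main lines are sound: the decomposition into $G_A$ and $G_B$, the lifting of $A$--$S$ separators to $A$--$B$ separators, the counting argument forcing the $n$ disjoint $A$--$S$ paths to cover $S$, the gluing along $V(G_A)\cap V(G_B)=S$, and the two deletion reductions in the second case (where you correctly use the hypothesis that only $A$ and $B$ attain the minimum to rule out the ``phantom'' separators $T\cup\{a,b\}$ and $T\cup\{v\}$) are all correct. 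One point needs tightening: the statement you are inducting on requires $A$ and $B$ to be \emph{disjoint}, but in your first case the minimum separator $S$ may well intersect $A$ (or $B$), so the recursive call to $(G_A, A, S)$ is not literally an instance of the theorem as stated. The standard fix is to prove the slightly more general form in which the two terminal sets need not be disjoint and a common vertex counts as a trivial path of length zero; with that strengthened induction hypothesis, everything you wrote goes through verbatim. A second, cosmetic point: in the subcase $V(G)\subseteq A\cup B$ with no $A$--$B$ edge, the correct conclusion is not that $n=0$ but that the empty set is a separator, so for $n\ge 1$ the second alternative of the theorem holds and there is nothing to prove; for $n=0$ the conclusion is trivial either way.
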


Take $C$ to be an inclusion-wise minimal set that separates the singletons from the complement of the singletons. Now consider all paths from $\emptyset$ to a vertex of $C$, that does not pass through other vertices of $C$. Then the union of these paths is again a wide rooted branching poset, and $C$ is exactly the set of maximal elements of this poset. Now we will show the following claim:

\begin{prop}
\label{propp}
Any wide rooted branching poset $P$ of $B_n$ has at least $n$ leaves.
\end{prop}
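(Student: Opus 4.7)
My plan is to argue by strong induction on $n$, in fact proving the strengthened statement that any rooted subposet $P\subseteq B_n$ in which every non-maximal element other than $\emptyset$ has at least two covers in $P$, and which contains exactly $d$ singletons, has at least $d$ leaves; Proposition~\ref{propp} is the wide case $d=n$. The base cases $n\le 2$ are immediate because in $B_1$ and $B_2$ the singletons have too few potential covers to be non-maximal under the branching condition.

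For the inductive step I would pick a singleton $\{x\}\in P$ and split. If $\{x\}$ is a leaf, then since $\{x\}$ is maximal no other element of $P$ contains $x$, so $P\setminus\{\{x\}\}$ sits in $B_{[n]\setminus\{x\}}\cong B_{n-1}$ as a rooted branching subposet with $d-1$ singletons; the inductive hypothesis yields at least $d-1$ leaves there, and adjoining $\{x\}$ gives at least $d$ leaves of $P$.

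When no singleton is a leaf, I fix $x$ and pass to the contraction
\[
P' := \{A\setminus\{x\} : x\in A\in P,\ A\neq \{x\}\} \cup \{\emptyset\} \subseteq B_{n-1}.
\]
Checking cover relations shows $P'$ is rooted branching, with singletons bijectively the $d_x\ge 2$ covers of $\{x\}$ in $P$, and $A\mapsto A\setminus\{x\}$ bijects leaves of $P$ containing $x$ with leaves of $P'$. The strengthened inductive hypothesis then gives $|L^x(P)|\ge d_x$ for every non-leaf singleton $\{x\}$, where $L^x(P):=\{M\in L(P):x\in M\}$. Note that $P'$ is generally \emph{not} wide in $B_{n-1}$ (only those $y$ with $\{x,y\}\in P$ appear as singletons of $P'$), which is exactly why the strengthened arbitrary-$d$ statement is the right inductive hypothesis rather than the wide version.

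The hard part will be combining these per-$x$ bounds into the total count $|L(P)|\ge d$. My plan is a Hall-type matching argument in the bipartite graph with parts $[n]$ and $L(P)$ and edges $i\sim M$ iff $i\in M$: assume for contradiction some $S\subseteq [n]$ violates Hall's condition and take $S$ minimal, so that $|N(S)|=|S|-1$ and every leaf in $N(S)$ meets $S$ in at least two points. Picking $x\in S$ (with $\{x\}$ non-leaf, since otherwise the first case applies), the bound $|L^x|\ge d_x$ from the $P'$-reduction together with a refined Hall inequality obtained by re-applying the strengthened hypothesis inside $P'$ in the smaller ambient dimension should force more than $|S|-1$ distinct leaves of $P$ meeting $S$, contradicting the deficiency. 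Making this final deficit-count precise, while handling the loss of wideness under $P\mapsto P'$ and the fact that $P\setminus \{\{x\}\}$ lives in $B_n$ rather than $B_{n-1}$ (so the induction is effectively doubly indexed on $n$ and $|P|$), will be the delicate step.
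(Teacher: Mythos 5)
Your two reductions are sound: deleting a singleton that happens to be a leaf drops to $B_{n-1}$ with one fewer singleton and one fewer leaf, and the contraction $P\mapsto P'$ at a non-leaf singleton $\{x\}$ is indeed a rooted branching subposet of $B_{n-1}$ whose singletons are the covers of $\{x\}$ and whose leaves are the leaves of $P$ through $x$, giving $|L^x(P)|\ge d_x$. But the proof is not complete, and the gap sits exactly where the content of the proposition lies: the step that combines the per-$x$ bounds into $|L(P)|\ge d$. As written, this step is only a hope (``should force''), resting on an unspecified ``refined Hall inequality.'' The bound $|L^x|\ge d_x$ cannot by itself contradict a deficient set $S$ with $|N(S)|=|S|-1$, because it says nothing about how the sets $L^x$, $x\in S$, overlap: a single leaf may contain many elements of $S$, and $d_x$ may be as small as $2$ while $|S|$ is large. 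Moreover your strengthened inductive statement is only a leaf \emph{count}, so re-applying it inside $P'$ yields no information about how the leaves of $P'$ distribute over the singletons of $P'$, which is precisely what a Hall-type recursion would need. To push your plan through you would have to upgrade the induction hypothesis to a Hall/SDR-type statement and verify it is preserved by the contraction; neither is done, and it is not clear the contraction alone transports such a statement.

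For comparison, the paper sidesteps the combination problem entirely with a weighted incidence count: give each incidence $(s,l)$ with $s\le_P l$ the weight $1/|l|$. Each leaf $l$ contributes at most $1$ in total, since it lies above at most $|l|$ singletons; and each singleton $q$ contributes at least $1$, because by Lemma~\ref{lemm} the subposet $P^q$ has at least $h$ leaves, each of cardinality at most $h$, where $h$ is its height. With $n$ singletons this forces at least $n$ leaves. In fact the same count restricted to any set $S$ of singletons gives $\bigl|\bigcup_{q\in S}L^q\bigr|\ge |S|$, i.e.\ exactly the Hall condition your sketch needs; so the missing ingredient in your argument is precisely this $1/|l|$ weighting (equivalently, the height lemma behind it), and once you add it the elaborate induction and contraction machinery becomes unnecessary.
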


To do so we will be using the following lemma:

\begin{lem}
\label{lemm}
Any rooted branching subposet $P$ of $B_n$ has at least $h$ leaves, where $h$ is the maximal height of the subposet.
\end{lem}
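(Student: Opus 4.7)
The plan is to prove the lemma by induction on the height $h$. The base case $h = 0$ forces $P = \{\emptyset\}$ (any other element of $P$ is comparable to $\emptyset$ and would create a chain of length $\geq 1$), so $P$ has a single leaf and satisfies the bound $1 \geq 0$.

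For the inductive step, assume the lemma for all heights less than $h$, and let $P$ be a rooted branching subposet of $B_n$ of height $h \geq 1$. Since $\emptyset$ is non-maximal, it has at least two singleton covers in $P$, which I denote $\{x_1\},\ldots,\{x_m\}$ with $m \geq 2$. Fixing a longest chain $\emptyset \subset A_1 \subset \cdots \subset A_h$ in $P$ and writing $A_1 = \{x_j\}$, I would pass to the contracted poset
\[ Q \;=\; \{B \setminus \{x_j\} : B \in P,\; x_j \in B\}, \]
viewed as a subposet of the Boolean lattice on $[n] \setminus \{x_j\}$. It is straightforward to verify that $Q$ is rooted (since $\{x_j\} \in P$ gives $\emptyset \in Q$) and branching (every cover in $P$ of a set containing $x_j$ must itself contain $x_j$, so the at-least-two covers of such a set survive the deletion of $x_j$), and that its longest chain has length exactly $h - 1$. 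Moreover, the maximal elements of $Q$ are in bijection with the leaves of $P$ containing $x_j$, so applying the induction hypothesis to $Q$ yields at least $h - 1$ distinct leaves of $P$, all containing $x_j$.

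To reach a total of $h$ leaves, I would exhibit one further leaf of $P$ that does \emph{not} contain $x_j$. Pick any $k \neq j$ (which exists since $m \geq 2$) and start at $\{x_k\} \in P$. While the current set $A$ is non-maximal in $P$, select a cover of $A$ in $P$ different from $A \cup \{x_j\}$; such a cover exists because $A$ has at least two covers in $P$ and at most one of them equals $A \cup \{x_j\}$. This upward extension strictly increases cardinality and never introduces $x_j$, so it terminates at a maximal element $L$ of $P$ with $x_j \notin L$. This $L$ is distinct from all the leaves produced in the previous step, giving the desired $h$-th leaf.

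The main subtlety lies in the contraction step: one must verify carefully that $Q$ inherits the branching property and that its height drops by exactly one, both of which reduce to the observation that any cover of a set containing $x_j$ automatically contains $x_j$. Once this is in hand, the greedy upward-extension argument that produces the final leaf is essentially immediate.
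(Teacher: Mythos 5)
Your proof is correct and takes essentially the same route as the paper's: contract the poset above one atom of the root, apply induction to get $h-1$ leaves containing that element, and then grow a chain upward from a second atom while avoiding the distinguished element (possible since at most one cover of any set can add it), producing one further leaf. The only differences are cosmetic: you induct on the height rather than on the number of vertices, and you select the distinguished atom as the one lying on a longest chain rather than as the atom whose upper set has maximal height.
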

\begin{proof}[Proof of Lemma~\ref{lemm}]
Let $P^x$ be the induced subposet below the singleton $x$, so that $P^x = \{y|y \geq_P \{x\}\}$. We use induction on the number of vertices to prove the lemma. When there is only one vertex, the statement is trivial. Assume for the sake of induction that the statement is true for subposets of size $< k$. Let $v_1,\ldots,v_r$ be the vertices at height $2$. Relabel them as $\{\{1\},\ldots,\{r\}\}$, such that $P^1$ has maximal height. Note that $P^1$ is isomorphic to a rooted branching subposet of $B_{n-1}$ of height $h-1$, so we may apply the induction hypothesis to obtain that $P^1$ has at least $h-1$ leaves. Now consider $P^2$. Starting from $\{2\}$, go down $P^2$ by avoiding the element $1$. Note that if $A \in P$ and $1 \not \in A$, then at most one set containing $1$ covers $A$, so branching guarantees we can avoid $1$. Thus we reach a leaf of $P^2$ which does not contain $1$ and thus cannot be in $P^1$. This proves that $P$ has at least $h$ leaves.

\end{proof}

Now we will prove the Proposition:
\begin{proof}[Proof of Proposition~\ref{propp}]
Assume for the sake of contradiction that we have at most $n-1$ leaves. We will show a contradiction by a fractional counting argument on the leaves. For each singleton $s$ and leaf $l$, define $f(s,l) = \frac{1}{|l|}$ if $s \leq_P l$, $0$ otherwise. Then $\sum_{s,l} f(s,l) \leq \sum_l |l| * \frac{1}{|l|} \leq n-1$. Since there are $n$ singletons, there has to exist some singleton $q$ such that $\sum_l f(q,l) <1$. Now consider $P^q$, the subposet below $q$. By Lemma~\ref{lemm}, $P^q$ has at least $h$ leaves where $h$ is the height of $P^q$. So $\sum_l f(q,l) = \sum_{l \in P^q} \frac{1}{|l|} \geq h * \frac{1}{h} = 1$, which leads to a contradiction.
\end{proof}




The above proposition tells us that $|C| < n$ cannot happen, and therefore has to be $n$ vertex disjoint full paths in $P$.

\begin{cor}
It is possible to find a set of vertex disjoint paths for Problem~\ref{prb:posetprb} in the case where $P_1 = \cdots =  P_n$.
\end{cor}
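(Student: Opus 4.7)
The plan is to apply Menger's theorem to the common poset $P = P_1 = \cdots = P_n$, with $A$ the set of all singletons $\{\{1\}, \ldots, \{n\}\}$ and $B$ the set of all co-singletons $\{[n]\setminus\{j\} : j \in [n]\}$. Both sets have size $n$, so Menger's theorem yields one of two alternatives. In the favorable case, we obtain $n$ pairwise vertex-disjoint paths from $A$ to $B$; since there are exactly $n$ sources and $n$ targets, these paths must begin at distinct singletons and end at distinct co-singletons, so we may assign the path starting at $\{i\}$ to the poset $P_i$, producing a full path for each $P_i$ and finishing the corollary in this branch.

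In the unfavorable case, Menger's theorem supplies a vertex separator $C \subseteq P$ with $|C| < n$. I would refine $C$ to be inclusion-minimal among such separators, and then consider the subposet $Q$ defined as the union of all simple paths in $P$ from $\emptyset$ to a vertex of $C$ that do not pass through any other vertex of $C$ along the way. The strategy is to verify that $Q$ meets the hypotheses of Proposition~\ref{propp}: it is rooted because $\emptyset \in Q$ by construction; it is wide because any chain from a singleton $\{i\}$ up to a co-singleton in $P$ must meet $C$, so $\{i\}$ lies on some admissible path to $C$; and $C$ is precisely the set of maximal elements of $Q$. Once these are in hand, Proposition~\ref{propp} yields $|C| = |\text{leaves}(Q)| \geq n$, contradicting $|C| < n$, so the favorable Menger alternative must actually hold.

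The main obstacle is establishing that $Q$ is \emph{branching}: every non-maximal element $A \in Q$ must be covered by at least two elements of $Q$ one rank higher. This is where inclusion-minimality of $C$ enters. If some $A \in Q \setminus C$ had a unique cover $A'$ in $Q$, then by $P$ being branching there is another cover $A_2$ of $A$ in $P$ with $A_2 \notin Q$; one then argues that $A_2$ cannot reach $B$ without recrossing $C$, and uses a path-surgery to swap some $c \in C$ lying above $A$ for $A$ itself, producing a strictly smaller separator and contradicting the choice of $C$. The remaining verifications (widthness, rootedness, leaves $= C$) are straightforward consequences of the definition of $Q$ together with the fact that $C$ separates $A$ from $B$, and the invocation of Proposition~\ref{propp} is then immediate.
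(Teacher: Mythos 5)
Your proposal follows the same route as the paper's own argument: Menger's theorem applied between the singletons and the co-singletons, an inclusion-minimal separator $C$ with $|C|<n$, the subposet $Q$ formed by the paths from $\emptyset$ to $C$ that meet $C$ only at their endpoints, and Proposition~\ref{propp} forcing at least $n$ leaves, contradicting $|C|<n$. The paper states the key claim---that $Q$ is a wide rooted branching poset with maximal elements exactly $C$---without proof, so your instinct to justify it is sound; however, the one step you elaborate is the one that does not work as described. Exchanging a vertex $c\in C$ lying above $A$ for $A$ itself produces a separator of the \emph{same} cardinality which is not a subset of $C$, so it contradicts neither the bound $|C|<n$ nor inclusion-minimality: no ``strictly smaller separator'' results from such a swap. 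In fact minimality of $C$ is not what makes $Q$ branching at a vertex $A\in Q\setminus C$. Instead, take a second cover $A_2$ of $A$ in $P$ and extend the chain $\emptyset\to\cdots\to A\to A_2$ upward in $P$ to a co-singleton (as the paper implicitly assumes is possible in this setting); since $C$ separates the singletons from the co-singletons and the portion of this chain below $A_2$ avoids $C$, the extension must meet $C$, and truncating it at the first meeting point witnesses $A_2\in Q$. Minimality of $C$ is needed for a point you list as ``straightforward'': it guarantees that each $c\in C$ is reachable from $\emptyset$ by a monotone path avoiding $C\setminus\{c\}$ (otherwise $C\setminus\{c\}$ would already separate, since any singleton-to-co-singleton path whose only $C$-vertex is $c$ would yield such a path), hence that $C$ really is the leaf set of $Q$; for the final contradiction it is enough that every maximal element of $Q$ lies in $C$, which is automatic from the construction. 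So: same approach as the paper, correct in outline, but the path-surgery sketch for branching should be replaced by the extension-to-the-first-crossing argument, and the role of minimality relocated accordingly.
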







\bibliographystyle{plain}    
\bibliography{sample}

\end{document}